\documentclass[11pt]{article} 
\usepackage[utf8]{inputenc}
\usepackage[T1]{fontenc}
\usepackage[twoside,left=3.66cm,right=3.66cm,bottom=3cm,top=2.66cm]{geometry}
\usepackage{microtype}

\frenchspacing
\linespread{1.04}  
\usepackage{amsmath,amsthm,amsfonts,amssymb}
\usepackage{mathrsfs}
\usepackage{cite}
\usepackage{enumerate}
\usepackage{cite}
\usepackage[all,2cell]{xy} \SilentMatrices
\usepackage{pdftricks}
\usepackage{epsfig,color}
\usepackage{mathtools}
\usepackage{sectsty}
\usepackage{anyfontsize}

\usepackage{framed}
\usepackage{color}
\definecolor{shadecolor}{gray}{0.875}
\definecolor{col}{RGB}{42, 95, 151}
\usepackage[colorlinks=true, linkcolor=col, citecolor=col, filecolor = col, menucolor = col, urlcolor = col]{hyperref}

\theoremstyle{plain}
\newtheorem{theorem}{Theorem}[section]
\newtheorem*{lemma*}{Lemma}
\newtheorem{lemma}[theorem]{Lemma}
\newtheorem*{theorem*}{Theorem}
\newtheorem{proposition}[theorem]{Proposition}
\newtheorem*{proposition*}{Proposition}
\newtheorem{corollary}[theorem]{Corollary}
\newtheorem*{corollary*}{Corollary}

\theoremstyle{definition}
\newtheorem{remark}[theorem]{Remark}
\newtheorem*{remark*}{Remark}
\newtheorem*{definition*}{Definition}

\newtheorem*{example*}{Example}

\newtheorem*{question*}{Question}

\def\RR{{\mathbb R}}

\def\c1{\operatorname{c_1}}
\def\c2{\operatorname{c_2}}

\def\CC{{\mathbb C}}
\def\ZZ{{\mathbb Z}}

\def\QQ{{\mathbb Q}}
\def\PP{{\mathbb P}}

\def\O{{\mathcal O}}

\def\Z{{\mathcal{Z}}}

\def\F{{\mathscr F}}

\def\+{\oplus}                   
\def\*{\otimes}

\def\Gr{{Gr}}

\def\eff{\operatorname{Eff}}
\def\nef{\operatorname{Nef}}

\def\Pic{\operatorname{Pic}}




\title{Nef cycles on some hyperk\"ahler fourfolds}
\author{John Christian Ottem}
\date{}

\begin{document}
\maketitle
\thispagestyle{empty}
\vspace{-1cm}

\begin{abstract}
 We study the cones of surfaces on varieties of lines on cubic fourfolds and Hilbert schemes of points on K3 surfaces. From this we obtain new examples of nef cycles which fail to be pseudoeffective. \end{abstract}
\thispagestyle{empty}

\section*{Introduction}
The cones of curves and effective divisors are essential tools in the study of algebraic varieties. Here the intersection pairing between curves and divisors allows one to interpret these cones geometrically in terms of their duals; the cones of nef divisors and `movable' curves respectively. In intermediate dimensions however, very little is known about the behaviour of the cones of effective cycles and their dual cones. Here a surprising feature is that nef cycles may fail to be pseudoeffective, showing that the usual geometric intuition for `positivity' does not extend so easily to higher codimension. In the paper \cite{delv}, Debarre--Ein--Lazarsfeld--Voisin presented examples of such cycles of codimension two on abelian fourfolds. 

In this paper, we show that similar examples can also be found on certain hyperk\"ahler manifolds (or `irreducible holomorphic symplectic varieties'). For these varieties, the cones of curves and divisors are already well-understood thanks to several recent advances in hyperk\"ahler geometry \cite{AV,B,BHT,BM,markman}. Our main result is the following:

\begin{theorem}\label{mainthm}
Let $X$ be the variety of lines of a very general cubic fourfold. Then the cone of pseudoeffective $2$-cycles on $X$ is strictly contained in the cone of nef 2-cycles. \end{theorem}

On such a hyperk\"ahler fourfold, the most interesting cohomology class is the second Chern class $c_2(X)$, which  represents a positive rational multiple of the Beauville--Bogomolov form on $H^2(X,\ZZ)$. This fact already implies that the class carries a certain amount of positivity, because it intersects products of effective divisors non-negatively (see \eqref{boucksom}). To prove Theorem \ref{mainthm} we show that this class is in the interior of the cone of nef cycles, but not in the cone of effective cycles. The main idea of the proof is to deform $X$ to the Hilbert square of an elliptic K3 surface. Here the claim for $c_2(X)$ follows from the fact that it has intersection number zero with the fibers of a Lagrangian fibration.

Thanks to B. Bakker, R. Lazarsfeld, B. Lehmann, U. Rie\ss, C. Vial, C. Voisin and L. Zhang for useful discussions. I am grateful to D. Huybrechts for his helpful comments and for inviting me to  Bonn where this paper was written.

\section{Preliminaries}

We work over the complex numbers. For a smooth projective variety $X$,  let $N_k(X)$ (resp. $N^{k}(X)$) denote the $\RR$-vector space of dimension (resp. codimension) $k$ cycles on $X$ modulo numerical equivalence. In $N_k(X)$ we define the pseudoeffective cone $\overline\eff_k(X)$ to be the closure of the cone spanned by classes of $k$-dimensional subvarieties. A class $\alpha\in N_k(X)$ is said to be {\em big} if it lies in the interior of $\overline \eff_k(X)$. This is equivalent to having $\alpha= \epsilon h^{\dim X-k}+e$ for $h$ the class of a very ample divisor; $\epsilon>0$; and $e$ an effective 2-cycle with $\RR$-coefficients. 

A codimension $k$-cycle is said to be {\em nef }if it has non-negative intersection number with any $k$-dimensional subvariety. We let $\nef^k(X)\subset N^k(X)$ denote the cone spanned by nef cycles; this is the dual cone of $\overline \eff_k(X)$. For the varieties in this paper, it is known that numerical and homological equivalence coincide, so we may consider these as cones in $H_{2k}(X,\RR)$ and $H^{2k}(X,\RR)$ respectively. If $Y\subset X$ is a subvariety, we let $[Y]\in H^*(X,\RR)$ denote its corresponding cohomology class.

For a variety $X$, we denote by $X^{[n]}$ the Hilbert scheme parameterizing length $n$ subschemes of $X$.

\subsection{Hyperk\"ahler fourfolds}

We will study effective 2-cycles on certain hyperk\"ahler varieties (or holomorphic symplectic varieties). By definition, such a variety is a smooth, simply connected algebraic variety admitting a non-degenerate holomorphic two-form $\omega$ generating $H^{2,0}(X)$. In dimension four there are currently two known examples of hyperk\"ahler manifolds up to deformation: Hilbert schemes of two points on a K3 surface and generalized Kummer varieties.

A hyperk\"ahler manifold $X$ carries an integral, primitive quadratic form  $q$ on the cohomology group $H^2(X,\ZZ)$ called the Beauville--Bogomolov form. The signature of this  form is $(3,b_2(X)-3)$, and $(1,\rho-1)$ when restricted to the Picard lattice $\Pic(X)=H^{1,1}(X)\cap H^2(X,\ZZ)$. For the hyperk\"ahler fourfolds considered in this paper it is known that the second Chern class $c_2(X)=c_2(T_X)$ represents a positive rational multiple of the Beauville--Bogomolov form \cite{SV}. It follows from this and standard properties of the Beauville--Bogomolov form that \begin{equation}\label{boucksom}c_2(X)\cdot D_1\cdot D_2\ge 0\end{equation} for any two distinct prime divisors $D_1,D_2$ on $X$.

\subsection{Specialization of effective cycles}

In the proof of Theorem \ref{mainthm}, we will need a certain semi-continuity result for effective cycles. This result is likely well-known to experts, but we include it here for the convenience of the reader and for future reference.

\def\X{\mathcal X}
\def\Y{\mathcal Y}

\begin{proposition}\label{deform}
Let $f:\X\to T$ be a smooth projective morphism over a smooth variety $T$ and suppose that $\alpha\in H^{k,k}(\X,\ZZ)$ is a class such that $\alpha|_{\X_t}$ is effective on the general fiber. Then $\alpha|_{\X_t}$ is effective for every $t\in T$.
\end{proposition}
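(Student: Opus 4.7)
The plan is to combine the boundedness of representatives of a fixed cohomology class---so that they fit into finite-type parameter spaces---with the classical fact that an irreducible complex algebraic variety cannot be expressed as a countable union of proper Zariski closed subsets.

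Fix a relative polarization $H$ on $\X/T$ and reduce to the case that $T$ is irreducible. For any $t \in T$ and any effective representative $Z = \sum_i n_i V_i$ of $\alpha|_{\X_t}$, each component $V_i$ has $H$-degree bounded by $\alpha \cdot H^{\dim \X_t - k}$, a constant depending only on $\alpha$ and $H$. Since projective subvarieties of bounded dimension and $H$-degree admit only finitely many Hilbert polynomials, the tuple $\tau = ((P_1,n_1),\ldots,(P_r,n_r))$ of Hilbert polynomials and multiplicities ranges over a countable set, and for each such $\tau$ one has a projective $T$-scheme
\[
\mathcal{H}_\tau \;=\; \Hilb^{P_1}(\X/T) \times_T \cdots \times_T \Hilb^{P_r}(\X/T).
\]

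Since cohomology classes of cycles in flat families vary locally constantly in the local system $R^{2k}f_*\ZZ$, and $\alpha$ restricts to a locally constant section of the same system, the locus $\mathcal{H}_\tau^\alpha \subseteq \mathcal{H}_\tau$ where the associated fiberwise cycle class coincides with $\alpha|_{\X_t}$ is open and closed in $\mathcal{H}_\tau$. Its image $T_\tau \subseteq T$ under the proper morphism $\mathcal{H}_\tau \to T$ is therefore closed. By hypothesis, $\bigcup_\tau T_\tau$ contains the complement of a countable union $\bigcup_q Z_q$ of proper closed subsets of $T$, so
\[
T \;=\; \bigcup_\tau T_\tau \;\cup\; \bigcup_q Z_q
\]
expresses $T$ as a countable union of closed subsets. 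Since an irreducible complex algebraic variety admits no such expression using only proper closed subsets, some $T_\tau$ must equal all of $T$, and hence $\alpha|_{\X_t}$ is effective for every $t \in T$.

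The main technical point to verify is the boundedness used in the enumeration step: one needs that effective cycles of bounded total $H$-degree in fibers of a smooth projective family give components whose Hilbert polynomials lie in a finite set. This follows from standard results on bounded families of subschemes of projective space (or equivalently finite-type Chow varieties), and is essentially the only place where the projective hypothesis really enters. Granting this, the remainder of the argument is a formal consequence of Hilbert scheme theory and the local system structure of fiberwise cohomology in a smooth proper family.
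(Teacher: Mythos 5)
Your proof is correct and follows essentially the same route as the paper's: both parameterize effective representatives via (countably many components of) the relative Hilbert scheme, use the local constancy of cycle classes of flat families as sections of the local system $R^{2k}f_*\ZZ$, use properness over $T$ to get closed images, and conclude with the fact that an irreducible complex variety is not a countable union of proper closed subsets. The one cosmetic difference is that your boundedness step is superfluous — the set of all Hilbert polynomials is already countable, which is all the countable-union argument needs, and the projectivity hypothesis really enters through the properness of $\Hilb^{P}(\mathcal{X}/T)\to T$ rather than through degree bounds.
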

\begin{proof}This follows basically from the theory of relative Hilbert schemes. The main point is that there are at most finitely many components $\rho_i:H_i\to T$ of the Hilbert scheme parameterizing cycles supported in the fibers of $f$ with cohomology class $\alpha$. For such a component $H_i$, let $\pi_i:\Z_i\to H_i$ denote the universal family of $H_i$. This fits into the diagram
$$
\xymatrix{
\Z_i \ar[r] \ar[d]^{\pi_i} &  \X \ar[d]^{f}\\
H_i \ar[r]^{\rho_i} & T
}
$$where $\pi_i$ is flat. Let $T'=\bigcup_i \rho_i(H_i)$, where the union is taken over all indices $i$ such that $\rho_i$ is not surjective. $T'$ is a proper closed subset of $T$. 

Let $t_0\in T$ be any point, and let $t\in T-T'$. By assumption the class $\alpha|_{\X_t}$ is represented by an effective cycle $Z_t$ on $\X_t$. By construction, there exists a component $H$ of the above Hilbert scheme, a universal family $\pi:\Z\to H$, a point $h\in H$ so that $Z_t=\Z_h$, and so that $\rho(H)=T$.  We have $\Z\subset \X'$, where $\X'=\X\times_T H$. Note that we have two sections, $[\Z]$ and $\rho^*\alpha$, of the local system $R^{2n-2k}\pi_*\ZZ$. These agree over $\rho^{-1}(t)$ so they agree everywhere. It follows that the  cycle $\Z|_{\X'_{h_0}}\in CH^{n-k}(\X_{t_0})$ is an effective cycle on $\X_{t_0}$ with class $\alpha|_{t_0}$, as desired.\end{proof}

By a limit argument, we get the following result which says that just like in the case of divisors, the effective cones can only become larger after specialization:

\begin{corollary}With the notation of Proposition \ref{deform}, if $\alpha|_t$ is big on a very general fiber, it is big on every fiber.\end{corollary}

%
%

\section{The variety of lines of a cubic fourfold}
Let $Y\subset \PP^5$ be a smooth cubic fourfold. The variety of lines $X=F(Y)$ on $Y$ is a smooth 4-dimensional subvariety of the Grassmannian $\Gr(2,6)$. A fundamental result due to Beauville and Donagi \cite{BD} says that $X$ is a holomorphic symplectic variety. Moreover, $X$ is deformation-equivalent to the Hilbert square $S^{[2]}$ of a K3 surface. 

From the embedding of $X$ in the Grassmannian we obtain natural cycle classes on $X$. In particular, if $U$ denotes the universal subbundle on $Gr(2,6)$, we can consider the Chern classes $g=c_1(U^\vee)|_X$ and $c=c_2(U^\vee)|_X$. Note that $g$ coincides with the polarization of $X$ given by the Pl\"ucker embedding of $X$ in $\PP^{14}$. When $Y$ is very general, a Noether--Lefschetz type argument shows that the vector space of degree four Hodge classes $H^{2,2}(X)\cap H^4(X,\QQ)$ is two-dimensional, generated by $g^2$ and $c$. We have the following intersection numbers:
\begin{equation}\label{gc}
g^4=108,\quad g^2\cdot c=45,\quad c^2=27. 
\end{equation}The cubic polynomial defining $Y$ shows that $X$ is the zero-set of a section of the vector bundle $S^3U^\vee$ on $Gr(2,6)$. Using this description, a standard Chern class computation shows that 
$$c_2(X)=5g^2-8c.$$
See for example \cite{amerik} or \cite{SV} for detailed proofs of these statements.
 
\subsection{Surfaces in the variety of lines}
There are several interesting surfaces on the fourfold $X$. The most basic of these are the restrictions of codimension two Schubert cycles from the ambient Grassmannian $Gr(2,6)$. In terms of $g^2$ and $c$, these cycles are given by $g^2-c$ and $c$. Moreover, since on $Gr(2,6)$ every effective cycle is nef (this is true on any homogeneous variety), also their classes remain nef and effective when restricted to $X$.

There are two natural surfaces on $X$ with class proportional to $g^2-c$. Fixing a general line $l\subset Y$, the surface parameterizing lines meeting $l$ represents the class $\frac13(g^2-c)$. Also, the variety of lines of `second type' (that is, lines with normal bundle $\O(1)^2\oplus \O(-1)$ in $Y$) is a surface with corresponding class $5(g^2-c)$ (see \cite{BD}).

The class $c$ is also represented by an irreducible surface. Indeed, for a general hyperplane $H=\PP^4\subset \PP^5$  the {\em Fano surface}$$\Sigma_H=\{[l]\in X \, | \,l\subset H\}$$ is a smooth surface of general type with $[\Sigma_H]=c$, corresponding to the lines in the cubic threefold $H\cap X$. 

We also have the following less obvious example. We can view $Y$ as the general hyperplane section of some cubic {fivefold} $V\subset \PP^6$. The variety of planes in $V$ is a smooth surface $F_2(V)$. As $Y$ is general, there is an embedding $F_2(V)\to F(Y)$ given by associating a plane to its intersection with the hyperplane section. The class of the image is a surface of general type with class $63c$ (see \cite{IM}). 

%

By the following result of Voisin \cite{voisin}, the class $c=c_2(U^\vee)$ lies the boundary in the cone of effective 2-cycles on $X$:

\begin{lemma}[Voisin]\label{voisin} Let $X$ be the variety of lines on a very general cubic fourfold. Then the class $c$ is extremal in the effective cone of 2-cycles.
\end{lemma}
This result deserves a few comments. First of all, we note that even though the class $c$ is the restriction of an extremal Schubert cycle on $Gr(2,6)$, there is a priori no reason to expect that it should remain extremal when restricted to $X$. To see how subtle this is, we mention that Voisin \cite[Theorem 2.9]{voisin} showed that in the case where $X$ is instead the variety of lines of a cubic {\em fivefold} $Y$, the corresponding class $c$ of lines in a hyperplane section of $Y$ {is} in fact big on $X$ (in the sense that it lies in the interior of the effective cone).

It is in fact quite surprising that the class $c$ is extremal as the surface $\Sigma_H\subset X$ behaves in many ways like a complete intersection. For instance, varying the hyperplane $H$, it is clear that it deforms in a large family covering $X$, and $\Sigma_H\cdot V>0$ for any other surface $V\subset X$. In fact, Voisin showed that the surface above gives a counterexample to a question of Peternell \cite{Pet08}, who asked whether a smooth subvariety with ample normal bundle is big. This question has a positive answer for curves and divisors\cite{ottJEMS}. In the current setting, the normal bundle of $\Sigma_H$ coincides with the restriction of $U^\vee$, which is ample on $\Sigma_H$.

\begin{remark}Even though the subvariety $\Sigma_H$ is very positively embedded in $X$, it is not an `ample subscheme' in the sense of \cite{ottem}. Essentially, \cite{ottem} defines a smooth subvariety $Y\subset X$ to be ample if large powers of the ideal sheaf kill cohomology of every coherent sheaf $\F$  in degrees $<\dim Y$ (that is,
$H^k(X,I_Y^m\otimes \F)=0$ for $k<\dim Y$ and $m \gg 0$). See \cite{ottem} for basic results on such subschemes. To see why $\Sigma_H$ is not ample, we can use the fact that ample subschemes satisfy Lefschetz hyperplane theorems on rational homology\cite{ottem}, so that their Betti numbers agree $b_i(X)=b_i(Y)$ for $i<\dim Y$. However, in our case $X$ is simply connected, whereas both $\Sigma_H$ and $F_2(V)$ have non-zero first Betti number. As ample subschemes also have ample normal bundles, this raises the question whether Peternell's question has a positive answer when restricted to ample subschemes, that is, whether smooth ample subvarieties have big cycle classes. As usual, the answer is affirmative for curves and divisors.
\end{remark}

\subsection{Lagrangian submanifolds}

Voisin's proof of Lemma \ref{voisin} is quite interesting: it uses the fact that $\Sigma_H$ is a Lagrangian submanifold of $X$. Indeed, the class of the 2-form $\omega$ is the image of a primitive cohomology class $\sigma^{3,1}\in  H^4(X,\QQ)_{prim}$ under the incidence correspondence $P\subset F\times X$; from this it follows that $\omega$ restricts to 0 on any surface with class proportional to $c$ (see \cite[Lemma 1.1]{voisin}). 

Now, the $(2,2)$-form $\omega\wedge \overline \omega$ either vanishes on a surface $V$ (in which case $V$ is Lagrangian), or restricts to a multiple of the volume form on $V$. In the latter case, the integral $\int_V \omega\wedge \overline \omega$ is strictly positive. Hence any surface $V$ so that $\omega|_Y$ vanishes cannot be homologous to a cycle of the form $\epsilon h^2+e$ where $h$ is ample and $e$ is effective. Consequently, since numerical and homological equivalence coincide on $X$, we see that the class $\omega\wedge \overline \omega$ defines a supporting hyperplane of the effective cone of surfaces.

This argument works in greater generality (using the Hodge--Riemann relations) and one obtains the following:
\begin{proposition}\label{Lagrangian}
Let $X$ be a smooth projective variety of dimension $n$ and let $\omega$ be a closed $p$-form representing a non-zero class $[\omega]\in H^{p,0}(X)$. If $V\subset X$ is a $p$-dimensional subvariety so that $\omega|_V=0$, then $V$ is not {homologous} to a big cycle.

In particular, if $X$ is an holomorphic symplectic variety of dimension four, and $V\subset X$ is a Lagrangian surface, then $[V]$ is in the boundary of $\overline{\eff}_2(X)$.
\end{proposition}

%

This result can be used to prove extremality of other cycle classes. Two standard examples of Lagrangian subvarieties are: (i) Any surface $V\subset X$ with $H^{2,0}(V)=0$ (e.g., a rational surface); and (ii)  $C^{[2]}\subset S^{[2]}$ for a curve $C$ in a K3 surface $S$. For instance, if $Y\subset \PP^5$ is a cubic fourfold containing a plane $P$, then the dual plane $P^\vee\subset X=F(Y)$ parameterizing lines in $P$ is extremal. (See also \cite[\S 3.2]{rempel}). 

\subsection{Proof of Theorem \ref{mainthm}}

Voisin's result gives one face of $\overline\eff_2(X)$. To bound the other half of the effective cone of $X$, we will consider the class $c_2(X)=5g^2-8c$. Note that this class is already quite positive, since it intersects products of divisors $D_1D_2$ non-negatively (cf. equation \eqref{boucksom}). In fact, this class will be shown to lie in the interior of the cone of nef 2-cycles, and is {\em strictly nef}, in the sense that $c_2(X)\cdot Z>0$ for every irreducible surface $Z\subset X$. We will first show that it is not big. We first consider a special case:

\begin{proposition}\label{cnnotbig}
Let $X$ be a hyperk\"ahler manifold of dimension $2n$, admitting a Lagrangian fibration. Then $c_k(X)$ is not big, for $1\le k\le n$. \end{proposition}

\begin{proof}
Let $f:X\to B$ denote this fibration. By a theorem of Matsushita, $\dim B=n$ and the general fiber $A$ of $f$ of is an $n$-dimensional abelian variety. In particular, $\Omega^1_A\simeq\O_A^n$. Let $Y=A\cap H_1\cap \cdots \cap H_{n-k}$, where $H_i\in |h|$, and $h$ is a very ample line bundle on $X$. The restriction of the normal bundle sequence
$$
0\to T_A|_Y=\O_Y^n\to T_{X}|_Y \to N_A|_Y= \O_Y^n\to 0 
$$to $Y$ shows that $c_k(X)\cdot A\cdot h^{n-k}=0$. Now if $c_k$ is a big cycle, then it is numerically equivalent to $\epsilon h^k+e$, for some $\epsilon>0$ and $e$ an effective cycle with $\RR-$coefficients. However, the cycle $A\cdot h^{n-k}$ is nef (and non-zero), and so it has strictly positive intersection number with $h^k$. Hence $c_k(X)$ cannot be big. 
\end{proof}
\begin{lemma}\label{hilbc2}
Let $X=S^{[n]}$ for a very general K3 surface $S$. Then the Chern classes $c_k(X)$ are not big for $1\le k\le n$.
\end{lemma}
\begin{proof}
By Proposition \ref{deform}, it suffices to prove this for a special K3 surface $S$. Choosing an elliptic K3 surface $S\to \PP^1$, we obtain a Lagrangian fibration $S^{[n]}\to (\PP^1)^{[n]}=\PP^n$, and so the claim follows from Proposition \ref{cnnotbig}.
\end{proof}

\begin{corollary}\label{c2notbig}
Let $X=F(Y)$ for a very general cubic fourfold $Y$. Then the second Chern class $c_2(X)=5g^2-8c$ is not big.\end{corollary}

\begin{proof}
As in the previous lemma, it suffices to exhibit one cubic fourfold for which the statement is true. We specialize to a Pfaffian cubic fourfold $Y$ so that $F(Y)=S^{[2]}$ for a degree 14 K3 surface $S$ (provided that $Y$ does not contain any planes \cite{BD}). Choosing $Y$ so that $S$ contains an elliptic fibration, we obtain the required abelian surface fibration $S^{[2]}\to (\PP^1)^{[2]}=\PP^2$ on $F(Y)$, and we conclude using Proposition \ref{cnnotbig}. \end{proof}

Combining this  with Voisin's result, we obtain the following bound for the effective cone $\overline{\eff}_2(X)$:
\begin{corollary}
Let $X$ be the variety of lines on a very general cubic fourfold. Then \begin{equation}\label{eff}\RR_{\ge 0} c+\RR_{\ge 0}(g^2-c)\subseteq {\eff}_2(X)\subseteq
\RR_{\ge 0} c+\RR_{\ge 0}(g^2-\tfrac85c)
\end{equation}
\end{corollary}

\begin{proof}[Proof of Theorem \ref{mainthm}]
By the previous corollary, we have that $\overline{\eff}_2(X)\subseteq \RR_{\ge0}c+\RR_{\ge 0}c_2(X)$. Using the intersection numbers \eqref{gc}, we find by duality:
\begin{eqnarray*}
\nef^2(X)&\supseteq& \left( \RR_{\ge0}c+\RR_{\ge 0}(5g^2-8c)\right)^\vee\\
&=& \RR_{\ge0}(20c-g^2)+\RR_{\ge 0}(3g^2-5c)\\
&\supsetneq &\overline{\eff}_2(X) 
\end{eqnarray*}\end{proof}
In the above proof, the class $c_2(X)$ plays an important role. The theorem implies that $c_2(X)$ is in the interior of the nef cone, but not big. We do not know if $c_2(X)$ is not pseudoeffective (i.e., a limit of effective cycles) or if there are special cubic fourfolds for which $c_2(X)$ is effective. 

The role of $c_2(X)$ is also interesting from the viewpoint of generic {\em non-projective} deformations $X$ of the fourfold above, since in that case we have $H^{2,2}(X,\CC)\cap H^4(X,\QQ)=\QQ c_2(X)$. In this case, Verbitsky \cite{verbitsky} proved that $X$ contains no analytic subvarieties of positive dimension at all.

\begin{remark}
In his thesis \cite{rempel}, Max Rempel also considered  examples of pseudoeffective cycles on hyperk\"ahler fourfolds, in particular also on the variety of lines of a cubic fourfold. Among his many results, Rempel shows that the class $3g^2-5c$ (which is proportional to $c_2(X)-\frac15g^2$) has no effective multiple. The proof involves a nice geometric argument using Voisin's rational self-map $\varphi:X\dashrightarrow X$. \end{remark}

\begin{remark}[An alternative definition of nefness]
In the Nakai--Moishezon criterion, a line bundle $L$ is ample if and only if for every $r=1,\ldots,\dim X$ we have $L^r\cdot Z>0$ for every subvariety of dimension $r$. This suggests the following naive fix for the definition of nefness for any cycle class: $\gamma\in N^k(X)$ could be defined to be nef if the restriction cycle $i^*\gamma\in N^k(Y)$ is pseudoeffective for every subvariety $i:Y\hookrightarrow X$. In particular, taking $i$ to be the identity map, $\gamma$ would itself be pseudoeffective on $X$. This is related to Fulger and Lehmann's notion of {\em universal pseudoeffectivity} \cite{FL} where the $f^*\gamma$ is required to be pseudoeffective for any morphism $f:Y\to X$. It is an interesting question is whether these notions are equivalent. 
\end{remark}

\section{Generalized Kummer varieties}
Let $X$ denote a generalized Kummer variety of dimension 4 of an abelian surface $A$. Recall that these are defined as the fiber over 0 of the addition map $\Sigma:A^{[3]}\to A$. In this case, the group $H^2(X,\ZZ)$ can be identified with $H^2(A,\ZZ)\oplus \ZZ e$, where the class $e$ is one half of the divisor corresponding to non-reduced subschemes in $A^{[3]}$. As before, $c_2(X)$ represents a positive multiple of the Beauville--Bogomolov form \cite{HT}. 

There are 81 distinguished Kummer surfaces $Z_\tau$ on $X$, whose classes are linearly independent in $H^4(X,\QQ)$: $Z_0$ is the closure of the locus of points $(0, a,-a)$ with $a\in A-0$, and the other $Z_\tau$ are the translates of $Z_0$ via the 3-torsion points $A[3]$. Hassett--Tschinkel \cite[Proposition 5.1]{HT} showed that 
$$
c_2(X)=\frac1{3}\sum_{\tau\in A[3]}[Z_\tau].
$$In particular, $3c_2(X)$ represents an effective cycle. Moreover, using the same argument as in Lemma \ref{hilbc2} we obtain

\begin{proposition}
Let $X$ be a generic projective generalized Kummer fourfold. Then the second Chern class $c_2(X)$ is effective, but not big.
\end{proposition}

{

\bibliographystyle{plain}

}

%
%

{Department of Mathematics, University of Oslo, Box 1053, Blindern, 0316 Oslo, Norway}

{{\it Email:} \verb"johnco@math.uio.no"}

\end{document}